
\documentclass{daj}

\usepackage{amssymb}
\usepackage{amsthm}
\usepackage{amsmath}
\usepackage[capitalise]{cleveref}

\newtheorem{prop}{Proposition}[section]
\newtheorem{theorem}[prop]{Theorem}
\newtheorem{lemma}[prop]{Lemma}
\newtheorem{corollary}[prop]{Corollary}
\newtheorem{conjecture}[prop]{Conjecture}

\newcommand{\N}{\mathbb{N}}
\newcommand{\R}{\mathbb{R}}
\newcommand{\G}{\Gamma}
\newcommand{\eps}{\varepsilon}
\newcommand{\Aut}{\text{\textup{Aut}}\,}

\dajAUTHORdetails{%
  title = {Small Doubling Implies Small Tripling for Balls of Large Radius}, 
  author = {Romain Tessera and Matthew Tointon},
  plaintextauthor = {Romain Tessera, Matthew Tointon},
    %
    %
    %
    %
    %
   %
}   

\dajEDITORdetails{%
   year={2025},
   number={9},
   received={4 December 2024},   
   published={2 September 2025},  
   doi={10.19086/da.143426},       
}   

\begin{document}

\begin{frontmatter}[classification=text]


\author[romain]{Romain Tessera 
\thanks{Supported by ANR-24-CE40-3137 }}
\author[matt]{Matthew Tointon}

\begin{abstract}
We show that if $K\ge1$ is a parameter and $S$ is a finite symmetric subset of a group containing the identity such $|S^{2n}|\le K|S^n|$ for some integer $n\ge2K^2$, then $|S^{3n}|\le\exp(\exp(O(K^2)))|S^n|$. Such a result was previously known only under the stronger assumption that $|S^{2n+1}|\le K|S^n|$. We prove similar results for locally compact groups and vertex-transitive graphs. We indicate some results in the structure theory of vertex-transitive graphs of polynomial growth whose hypotheses can be weakened as a result.
\end{abstract}
\end{frontmatter}

\section{Introduction}
The famous \emph{Plünnecke--Ruzsa} inequalities state that if $K\ge1$ is a parameter and $A$ is a finite subset of an abelian group satisfying the \emph{small doubling} hypothesis $|A+A|\le K|A|$, then $|mA-nA|\le K^{m+n}|A|$ for all non-negative integers $m,n$ \cite{plunnecke,ruz.pl.1,ruz.pl.2} (see also \cite{petridis} for a dramatically simplified proof). It has long been known that the directly analogous statement fails for arbitrary non-abelian groups, and that a bound of the form $|A^2|\le K|A|$ does not in general imply a bound of the form $|A^3|\le O_K(|A|)$ (see e.g. \cite[Example 2.5.2]{book}, which is essentially reproduced from \cite{tao.product.set}). Nonetheless, under the stronger \emph{small tripling} assumption $|A^3|\le K|A|$, we do have that $|A^{\epsilon_1}\cdots A^{\epsilon_m}|\le K^{3(m-2)}|A|$ for all choices of $\epsilon_i=\pm1$ and all $m\ge3$ \cite[Proposition 2.5.3]{book}. A similar result holds more generally if $A$ is an open precompact subset of a locally compact group, with Haar measure in place of cardinality \cite[Lemma 3.4]{tao.product.set}.

Breuillard and the second author showed that if $A$ is a \emph{ball} in a Cayley graph then the small-tripling hypothesis can \emph{almost} be weakened to a small-doubling hypothesis, as follows.
\begin{theorem}[Breuillard--Tointon {\cite[Lemma 2.10]{bt}}]\label{lem:bt}
Let $K\ge1$. Suppose $S$ is a finite symmetric subset of a group containing the identity, and $|S^{2n+1}|\le K|S^n|$ for some $n\in\N$. Then $|S^{3n}|\le O_K(|S^n|)$, and $S^{2n}$ is an $O_K(1)$-approximate group.
\end{theorem}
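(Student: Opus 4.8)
The plan is to derive everything from the single estimate
\[
|S^{3n}| \le C_K\,|S^n|, \qquad C_K = O_K(1),
\]
i.e. from the assertion that the ball $S^n$ has \emph{tripling} bounded in terms of $K$. Granting this, the second conclusion is standard: applying the non-abelian Plünnecke--Ruzsa inequality for small tripling quoted above (Proposition 2.5.3 of \cite{book}) to $A=S^n$, with tripling constant $C_K$, gives $|S^{mn}|\le C_K^{3(m-2)}|S^n|$ for all $m\ge3$, and then a routine Ruzsa-covering argument (cover $S^{4n}$ by pairwise-disjoint translates of $S^n$ centred in $S^{4n}$, observe there are at most $|S^{5n}|/|S^n|=O_K(1)$ of them by the displayed estimate, and enlarge $S^n$ to $S^{2n}$) shows that $(S^n)^2=S^{2n}$ is an $O_K(1)$-approximate group. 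So the whole problem reduces to the displayed estimate.

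Towards it, first note that the hypothesis already controls every intermediate ball: since $S^{n+j}\subseteq S^{2n+1}$ for $0\le j\le n+1$, we get $|S^{n+j}|\le K|S^n|$ for all such $j$, and in particular $|S^{2n}|\le K|S^n|$, so $S^n$ has doubling at most $K$. The remaining task is to upgrade this doubling bound to a tripling bound. This is where it is essential that $S^n$ is a \emph{ball}, i.e. that $S^aS^b=S^{a+b}$: for an arbitrary set of doubling $K$ no bound on the tripling constant holds, as the example recalled in the introduction shows. The extra half-scale of control encoded by the ``$+1$'' — concretely, the fact that $|S^n\cdot S^{n+1}|=|S^{2n+1}|\le K|S^n|$ — is what should make the upgrade possible.

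The mechanism I would use is a Ruzsa-covering argument propagated through the scales $n,n+1,\dots,3n$. From $|S^n\cdot S^{n+1}|\le K|S^n|$ one extracts, by packing pairwise-disjoint translates of $S^n$ inside $S^{2n+1}$ with centres taken in $S^{n+1}$, a set $T$ with $|T|\le K$ and $T\subseteq S^{n+1}$, relative to which one propagates covers of the successive balls $S^{n+1},S^{n+2},\dots$ by translates of comparably-sized balls, feeding the doubling bound $|S^{2n}|\le K|S^n|$ and the Ruzsa triangle inequality $|S^{a+c}|\,|S^b|\le|S^{a+b}|\,|S^{b+c}|$ (valid for the symmetric sets $S^a,S^b,S^c$) back in at each stage, and iterating a bounded number of times until radius $3n$ is reached.

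The step I expect to be the main obstacle is making this iteration \emph{efficient}. A naive propagation — covering $S^{m+1}$ in terms of $S^m$ one unit at a time — multiplies the number of translates by a constant at each of the $\sim n$ steps and so yields only a useless bound of the form $K^{O(n)}|S^n|$; indeed, the Ruzsa triangle inequality used by itself appears to give only bounds exponential in $n$ (or only quadratic in $|S^n|$). One must instead arrange matters so that the number of translates, and hence $C_K$, grows only when the radius \emph{doubles}, at a cost of a bounded factor per doubling — mirroring the model case of a group of polynomial growth, where $|S^{3n}|/|S^n|\approx K^{\log_2 3}$ — rather than per unit increment. Getting this telescoping to close, using the ball identity to trade long products for short ones, is the delicate point, and the reason the hypothesis is imposed at $2n+1$ rather than at $2n$ is precisely to guarantee one spare unit of room each time one does so.
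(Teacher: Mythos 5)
Your reduction of the approximate-group conclusion to the tripling estimate is fine (the paper itself notes that this part follows from \cref{lem:tripling->AG}), and you have correctly located the crux: upgrading the doubling bound $|S^{2n}|\le K|S^n|$ to $|S^{3n}|\le O_K(|S^n|)$ using the fact that $S^n$ is a ball. But that crux is exactly where your proposal stops. You describe a scale-by-scale covering scheme, observe yourself that its naive implementation yields only $K^{O(n)}|S^n|$, and then assert that ``one must instead arrange matters so that the number of translates grows only when the radius doubles'' without supplying any mechanism that achieves this; the final paragraph is a description of the difficulty rather than a resolution of it. I also doubt the mechanism you gesture at can be made to close as stated: the Ruzsa triangle inequality applied to balls only ever yields bounds of the shape $|S^{2n+j}|\le K^j|S^n|$ (exponential in the number of unit steps, with $j=n$ steps needed to reach radius $3n$), and a genuine ``per doubling of radius'' induction would require covering $S^n$ efficiently by translates of $S^{\lfloor n/2\rfloor}$, for which the hypothesis gives no control whatsoever on $|S^n|/|S^{\lfloor n/2\rfloor}|$.

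The arguments that actually work -- both the one in \cite{bt} and the one this paper gives for \cref{thm:doubling=>tripling} and \cref{thm:doubling.lc} -- route through approximate groups rather than through covers of balls by balls. One first covers $S^n$ by boundedly many translates $xU$ of a single $O_K(1)$-approximate group $U\subseteq S^{O(n)}$ (\cref{prop:doubling.covered.by.app.grp}); then, and this is the step your sketch is missing entirely, one arranges for the translating set to lie in a ball of radius roughly $n/2$ about the identity (\cref{eq:xU.disjoint} and \cref{prop:translates.app.grp.bdd.reps}); finally one applies the ball identity $S^{a+b}=S^aS^b$ via \cref{lem:inclusions.local} to conclude $S^{3n}\subseteq X''U^{O_K(1)}$. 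The point is that the number of translates never grows at all during the propagation from radius $n$ to radius $3n$; all of the growth is absorbed into powers of $U$, and these grow only geometrically precisely because $U$ is an approximate group. Your proposal never produces an approximate group until after the main estimate is assumed, so it has no object whose repeated powers it can control, and the main estimate is left unproved.
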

A \emph{$K$-approximate group} is a symmetric subset $A$ of a group containing the identity such that $A^2$ is covered by at most $K$ left translates of $A$. The last conclusion of \cref{lem:bt} is not stated in the reference, but follows immediately from the first conclusion and \cref{lem:tripling->AG}, below.

The purpose of the present note is to show that, for sufficiently large radii, a genuine small-doubling hypothesis is sufficient in \cref{lem:bt}. Indeed, we show this more generally for locally compact groups, in which context nothing weaker than small tripling was previously known to suffice.
\begin{theorem}\label{thm:doubling.lc}
Let $G$ be a locally compact group with Haar measure $\mu$, and let $K\ge1$. Suppose $S\subseteq G$ is a precompact symmetric open set containing the identity such that $\mu(S^{2n})\le K\mu(S^n)$ for some integer $n\ge8K^4$. Then $\mu(S^{3n})\le(2^{12}K^{24})^{10\cdot5^{4K^4}}\mu(S^n)$, and $S^{2n}$ is a precompact open $(2^{12}K^{24})^{30\cdot5^{4K^4}}$-approximate group.
\end{theorem}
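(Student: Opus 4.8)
The plan is to reduce \cref{thm:doubling.lc} to the odd-exponent result \cref{lem:bt}, applied not at scale $n$ but at a carefully chosen scale $m\le n$ that is not too much smaller than $n$, and then to carry the resulting approximate-group structure back up to scale $3n$. Throughout, $\mu$ being left Haar measure, every $S^k$ is precompact and open, $k\mapsto\mu(S^k)$ is nondecreasing, and Ruzsa's covering lemma is available in the form: if $\mu(AB)\le L\mu(B)$ then $A\subseteq XBB^{-1}$ for some finite $X\subseteq A$ with $|X|\le L$. The whole difficulty is concentrated in the following claim: \emph{if $\mu(S^{2n})\le K\mu(S^n)$ with $n\ge 8K^4$, then there is an integer $m$ with $n/5^{4K^4}\le m\le n$ such that $\mu(S^{2m+1})\le (2^{12}K^{24})\mu(S^m)$.}

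To prove this claim I would run a descent through scales $n=m_0>m_1>m_2>\cdots$, losing at most a bounded factor (at most $5$, say) per step: at the current scale $m_i$, if $\mu(S^{2m_i+1})\le (2^{12}K^{24})\mu(S^{m_i})$ then I stop; otherwise I would convert the failure of this inequality — using that $S^{2m_i}$ is genuinely the \emph{square} $(S^{m_i})^2$, together with Ruzsa covering and the even-doubling bound being carried at scale $m_i$ — into an even-doubling bound $\mu(S^{2m_{i+1}})\le K_{i+1}\mu(S^{m_{i+1}})$ of the same polynomial-in-$K$ shape at a scale $m_{i+1}\ge m_i/5$, while recording that the step has consumed a definite portion of the ``doubling budget'' ultimately limited by $\mu(S^{2n})\le K\mu(S^n)$. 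Since that budget is finite, the descent must reach a successful scale within $O(K^4)$ steps, and the hypothesis $n\ge 8K^4$ is exactly what keeps the scale from collapsing before then. This step is the main obstacle: the growth function $k\mapsto\mu(S^k)$ is \emph{not} log-concave in general — accelerating growth does occur at small radii, which is precisely why small doubling alone does not suffice there — so the doubling bound cannot simply be transported from scale $n$ to neighbouring scales, and the bookkeeping of how the budget is spent is delicate. This is also the only place where the hypothesis that $S^n$ is a \emph{ball of large radius}, rather than an arbitrary small-doubling set, is genuinely used.

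Granting the claim, the rest is routine. Fix $m$ as above and set $K':=2^{12}K^{24}$. By an effective form of \cref{lem:bt} applied at scale $m$, $S^{2m}$ is a $(K')^{O(1)}$-approximate group, the implied exponent being absolute. A $D$-approximate group $A$ satisfies $\mu(A^r)\le D^{r-1}\mu(A)$ for every $r\ge 1$; applying this with $A=S^{2m}$ and $r:=\lceil 3n/(2m)\rceil\le 2\cdot 5^{4K^4}$, and using $\mu(S^{2m})\le\mu(S^{2n})\le K\mu(S^n)$ together with $S^{3n}\subseteq S^{2mr}=(S^{2m})^r$, gives $\mu(S^{3n})\le (K')^{O(r)}\mu(S^{2m})\le (2^{12}K^{24})^{10\cdot 5^{4K^4}}\mu(S^n)$ once constants are collected. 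Feeding this bound on $\mu(S^{3n})$ into \cref{lem:tripling->AG} (applied to $S^n$) then shows that $S^{2n}$ is a $(2^{12}K^{24})^{30\cdot 5^{4K^4}}$-approximate group, the cube in the exponent being exactly what that lemma costs. In all of this the passage from cardinality to Haar measure is harmless: every set in sight is precompact and open, and the covering lemmas are used in their measure-theoretic forms, which hold verbatim.
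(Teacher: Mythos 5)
There is a genuine gap, and it sits exactly where you say the ``whole difficulty is concentrated''. Your argument reduces everything to the claim that $\mu(S^{2n})\le K\mu(S^n)$ forces $\mu(S^{2m+1})\le 2^{12}K^{24}\mu(S^m)$ at some scale $m\ge n/5^{4K^4}$, but the descent you sketch does not establish this. You never specify how the \emph{failure} of the odd-doubling inequality at scale $m_i$, combined with an even-doubling bound at that scale, produces an even-doubling bound with a controlled constant at a scale $m_{i+1}\ge m_i/5$; nor do you define the ``doubling budget'', explain why each step consumes a definite portion of it, or why it is exhausted within $O(K^4)$ steps. The obstruction you yourself identify --- that $k\mapsto\mu(S^k)$ need not be log-concave, so a doubling bound at scale $n$ says nothing about the ratio $\mu(S^{2m+1})/\mu(S^m)$ at nearby scales $m<n$ --- is precisely what makes such a transport of doubling between scales hard, and I do not see that your claim is true with the stated constants, let alone proved. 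A secondary issue: \cref{lem:bt} is stated, and was previously known, only for discrete groups; the paper explicitly notes that in the locally compact setting nothing weaker than small tripling was previously known to suffice, so even granting your claim you would still owe a proof of a locally compact analogue of \cref{lem:bt}.

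For comparison, the paper takes a different route that never transports the doubling hypothesis between scales. It applies \cref{prop:doubling.covered.by.app.grp-lc} to $A=S^n$ to cover $S^n$ by at most $4K^4$ translates $XU$ of an approximate group $U\subseteq S^{4n}$, uses \cref{eq:xU.disjoint} to refine $X$ so that distinct translates are strongly separated, and then invokes the key new \cref{prop:translates.app.grp.bdd.reps}: the representatives can be relocated inside $S^{|X|-1}\subseteq S^{\lfloloor n/2\rfloor}$ --- this is where the hypothesis $n\ge8K^4$ enters, simply to ensure $4K^4-1\le n/2$, not via any budget argument. \cref{lem:inclusions.local} then propagates the inclusion to $S^{3n}\subseteq X''U^{10\cdot5^{4K^4}}$, and the approximate-group bound on powers of $U$ finishes. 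Your closing steps (bounding powers of an approximate group and deducing the approximate-group statement from \cref{lem:tripling->AG}) are fine, but they rest entirely on the unproved claim.
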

Here and throughout, by a Haar measure we mean a \emph{left Haar} measure, so that $\mu(xA)=\mu(A)$ for all $A\subseteq G$ and $x\in G$.

A particular reason to be interested in locally compact groups is that the automorphism group of a vertex-transitive graph $\Gamma$ is a locally compact group with respect to the topology of pointwise convergence (see e.g. \cite[\S4]{ttTrof} for details). Given a vertex $o\in\Gamma$, the set $S=\{g\in\Aut(\Gamma):d(o,g(o))\le1\}$ is a compact open generating set for $\Aut(\Gamma)$; moreover, if we write $\beta_\Gamma(n)$ for the number of vertices in a ball of radius $n$ in $\Gamma$, and $\mu$ for the left Haar measure on $\Aut(\Gamma)$ normalised so that the stabiliser of $o$ has measure $1$, then we have $\beta_\Gamma(n)=\mu(S^n)$ for each $n\in\N$ \cite[Lemma 4.8]{ttTrof}. \cref{thm:doubling.lc} therefore gives rise to the following corollary.
\begin{corollary}\label{cor:trans}
Let $K\ge1$. Suppose $\Gamma$ is a locally finite vertex-transitive graph satisfying $\beta_\Gamma(2n)\le K\beta_\Gamma(n)$ for some integer $n\ge8K^4$. Then $\beta_\Gamma(3n)\le(2^{12}K^{24})^{10\cdot5^{4K^4}}\beta_\Gamma(n)$.
\end{corollary}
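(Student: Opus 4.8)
The plan is to deduce \cref{cor:trans} from \cref{thm:doubling.lc} by running the latter inside the automorphism group of $\Gamma$. Concretely, I would set $G=\Aut(\Gamma)$, equipped with the topology of pointwise convergence, under which $G$ is locally compact \cite[\S4]{ttTrof}; fix a vertex $o$; take $S=\{g\in G:d(o,g(o))\le1\}$; and let $\mu$ be the left Haar measure normalised so that the stabiliser $G_o$ of $o$ has $\mu(G_o)=1$. The whole argument then consists of checking that $S$ satisfies the hypotheses of \cref{thm:doubling.lc} with the same $K$ and the same $n$, and of translating the conclusion back into a statement about $\beta_\Gamma$.

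For the hypotheses: $S$ contains the identity and is symmetric because $d(o,g(o))=d(g^{-1}(o),o)$; it is open because it is a union of cosets of the open subgroup $G_o$; and it is precompact — indeed compact — since $\Gamma$ is locally finite, so $G_o$ embeds as a closed subgroup of a product of finite symmetric groups (one for each sphere about $o$), hence is compact, and $S$ is a finite union of $G_o$-cosets by vertex-transitivity. The crucial identity is $\beta_\Gamma(n)=\mu(S^n)$ for every $n\in\N$, which is \cite[Lemma 4.8]{ttTrof}: vertex-transitivity gives $S^n(o)=B(o,n)$ (the triangle inequality for the isometries comprising $S^n$ gives $\subseteq$, and transitivity of $G$ on each sphere, applied inductively, gives $\supseteq$), the orbit map $S^n\to B(o,n)$ has fibres that are $G_o$-cosets of measure $1$, and hence $\mu(S^n)=|B(o,n)|=\beta_\Gamma(n)$. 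With this identity, the hypothesis $\beta_\Gamma(2n)\le K\beta_\Gamma(n)$ becomes exactly $\mu(S^{2n})\le K\mu(S^n)$, and the bound $n\ge8K^4$ is unchanged.

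\cref{thm:doubling.lc} now applies verbatim and yields $\mu(S^{3n})\le(2^{12}K^{24})^{10\cdot5^{4K^4}}\mu(S^n)$; translating back via $\mu(S^m)=\beta_\Gamma(m)$ with $m=3n$ and $m=n$ gives the claimed bound $\beta_\Gamma(3n)\le(2^{12}K^{24})^{10\cdot5^{4K^4}}\beta_\Gamma(n)$. There is essentially no genuine obstacle beyond assembling the standard dictionary between a locally finite vertex-transitive graph and its automorphism group; the one point that needs a little care is the normalisation of Haar measure together with the verification that the orbit map $S^n\to B(o,n)$ has $G_o$-cosets as fibres — so that the chosen normalisation makes $\mu(S^n)$ a genuine vertex count rather than merely proportional to one — but both are handled in \cite[\S4]{ttTrof}. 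Note finally that the approximate-group conclusion of \cref{thm:doubling.lc} has no natural graph-theoretic counterpart, so it is simply dropped in the corollary.
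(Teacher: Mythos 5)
Your proposal is correct and is exactly the argument the paper intends: the preceding discussion in the paper sets up $\Aut(\Gamma)$ as a locally compact group with the compact open generating set $S=\{g:d(o,g(o))\le1\}$ and the identity $\beta_\Gamma(n)=\mu(S^n)$ from \cite[Lemma 4.8]{ttTrof}, after which \cref{cor:trans} follows immediately from \cref{thm:doubling.lc}. Your additional verifications of symmetry, openness and precompactness of $S$ are fine and match the standard dictionary the paper cites.
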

In the special case of discrete groups in \cref{thm:doubling.lc}, or equivalently Cayley graphs in \cref{cor:trans}, our argument gives slightly stronger bounds, as follows.
\begin{theorem}\label{thm:doubling=>tripling}
Let $K\ge1$. Suppose $S$ is a finite symmetric subset of a group containing the identity, and $|S^{2n}|\le K|S^n|$ for some integer $n\ge2K^2$. Then $|S^{3n}|\le K^3(3^9K^{18})^{10\cdot5^{K^2}}|S^n|$, and $S^{2n}$ is an $K^9(3^9K^{18})^{30\cdot5^{K^2}}$-approximate group.
\end{theorem}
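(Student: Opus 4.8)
The plan is to reduce the statement to \cref{lem:bt}. It suffices to prove the ``one extra power'' estimate: under the hypotheses, $|S^{2n+1}|\le K'|S^n|$ for a suitable parameter $K'$. Indeed, \cref{lem:bt} applied at radius $n$ with parameter $K'$ then gives $|S^{3n}|\le O_{K'}(|S^n|)$, and together with \cref{lem:tripling->AG} gives that $S^{2n}$ is an $O_{K'}(1)$-approximate group; chasing the size of $K'$ through these invocations produces the two displayed bounds. (For \cref{thm:doubling.lc} one first records the Haar-measure analogue of \cref{lem:bt}, obtained from the classical small-tripling argument exactly as \cite[Lemma 3.4]{tao.product.set} generalises \cite[Proposition 2.5.3]{book}; the discrete case in \cref{thm:doubling=>tripling} is sharper only because it avoids the measure-theoretic overhead.) Thus the whole content is to control the single ``odd'' power $|S^{2n+1}|$ from control of the ``even'' power $|S^{2n}|$.

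To prove the one-extra-power estimate, work with the incremental ratios $\rho_j:=|S^{j+1}|/|S^j|\ge1$. The hypothesis reads $\prod_{j=n}^{2n-1}\rho_j=|S^{2n}|/|S^n|\le K$, and I must bound $\rho_{2n}$ (equivalently $\prod_{j=n}^{2n}\rho_j$). First a pigeonhole step: since $n$ numbers $\ge1$ with product $\le K$ must have a term at most $K^{1/n}\le K^{1/(2K^2)}$, there is a radius $m$ with $n\le m<2n$ at which the ball is ``nearly stabilised'', $\rho_m\le K^{1/n}$. Second -- and this is the heart of the matter -- I must propagate this near-stabilisation \emph{upward through larger radii}, showing that $\rho_j$ stays bounded by $K^{O(1)}$ for every $j$ in a range reaching past $2n$ (in fact up to a bounded multiple of $m$, which is also what one needs for the approximate-group conclusion directly). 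The tools are Ruzsa's covering lemma together with the \emph{ball structure} $S^{j+1}=S\cdot S^{j}$ -- crucially \emph{not} any Plünnecke/Petridis-type transfer, which is unavailable (see below). Since each propagation increment can only advance the radius by a bounded factor while raising the constant to a bounded power, one is led to an iteration of length $O(K^2)$; this both explains the hypothesis $n\ge2K^2$ -- there must be room for $\sim K^2$ steps, and $K^{1/n}$ must be made sufficiently close to $1$ -- and produces the doubly-exponential shape $(3^{9}K^{18})^{O(5^{K^2})}$ of the final bound.

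The decisive obstacle is exactly that for general non-abelian groups there is no Plünnecke--Ruzsa inequality: $|A^2|\le K|A|$ does not imply $|A^3|\le O_K(|A|)$ (\cite[Example 2.5.2]{book}), so one cannot simply ``transfer'' a doubling or stabilisation estimate from radius $m$ to radius $2m$ for a generic set. Consequently every step of the upward propagation must use that the set in question is a \emph{ball} -- that the same generating set $S$ is appended at every radius -- which is what rules out the pathological examples and forces the slow, constant-costly iteration. Controlling the multiplicative blow-up of the constant across the $O(K^2)$ iterations, and arranging the propagation so that it genuinely closes the ``even versus odd'' gap between the $|S^{2n}|$ of our hypothesis and the $|S^{2n+1}|$ of \cref{lem:bt}, are the points that require care. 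In the locally compact setting there is the additional, routine, burden of redoing the covering estimates with Haar measure and precompact open sets, which is what accounts for the extra constants relative to the discrete case.
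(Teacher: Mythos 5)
There is a genuine gap: your entire argument hinges on the ``one extra power'' estimate $|S^{2n+1}|\le K'|S^n|$, and the step that would prove it --- the ``upward propagation'' of near-stabilisation --- is asserted rather than carried out. The pigeonhole step only produces a single radius $m\in[n,2n)$ with $|S^{m+1}|\le K^{1/n}|S^m|$, and you give no mechanism for converting control at that one radius into control at radius $2n$. The tools you name do not supply one: Ruzsa's covering lemma applied to balls needs a bound on $|S^{a}\cdot S^{b}|$ with $a>b$, i.e.\ precisely an \emph{odd}-power hypothesis of the form $|S^{2n+1}|\le K|S^n|$ (from $|S^{2n}|\le K|S^n|$ alone it only yields the vacuous inclusion $S^n\subseteq XS^{2n}$). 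So the sketch is essentially circular --- the propagation step you defer is the whole content of the theorem, which is exactly why \cref{lem:bt} was previously stuck at $|S^{2n+1}|\le K|S^n|$. Your reading of the hypothesis $n\ge2K^2$ as leaving ``room for $\sim K^2$ pigeonhole/iteration steps'' is also not how that hypothesis is actually used.

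For comparison, the paper closes the even/odd gap by a different route that never passes through $|S^{2n+1}|$: it covers $S^n$ by at most $K^2$ translates $xU$ of an approximate group $U\subseteq S^{2n}$ (\cref{prop:doubling.covered.by.app.grp}, which needs only the genuine doubling hypothesis), prunes the translates so that distinct ones are strongly separated (\cref{eq:xU.disjoint}), and then --- the key new idea --- shows that the translating set can be relocated inside the small ball $S^{K^2-1}\subseteq S^{\lfloor n/2\rfloor}$ (\cref{prop:translates.app.grp.bdd.reps}); this is what makes the hypothesis $n\ge 2K^2$ appear. Once the covering set lies in $S^{\lfloor n/2\rfloor}$, the elementary iteration of \cref{lem:inclusions.local} gives $S^{3n}\subseteq X''U^{O(5^{K^2})}$, and the approximate-group property of $U$ bounds $|S^{3n}|$. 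If you want to salvage your reduction, you would need to supply a complete proof of the propagation step with explicit constants (note also that \cref{lem:bt} as stated gives only $O_{K'}(\cdot)$, so the explicit bounds in the theorem would require unwinding that reference as well); as written, the central claim is unproven.
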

We see no reason to believe that the bounds of \cref{thm:doubling.lc,thm:doubling=>tripling} are anything close to optimal.

To state the obvious, the above results can immediately be applied to weaken the assumptions required in any result about groups or transitive graphs with a tripling bound on a ball of large radius amongst its hypotheses. Such results include the following, for example:
\begin{itemize}
\item Breuillard and the second author \cite[Theorem 1.1]{bt} show that for all $K\ge1$ there exist $n_0=n_0(K)$ and $C=C(K)\ge1$ such that if $S$ is a symmetric subset of a group containing the identity, and $|S^{2n+1}|\le K|S^n|$ for some integer $n\ge n_0$, then $|S^{rm}|\le C^r|S^m|$ for all $r,m\in\N$ with $m\ge n$.

\item Easo and Hutchcroft \cite{unif.finite.pres} have recently proved, and we have refined in other work \cite{ttBalls}, a uniform version of the classical fact that a group of polynomial growth is finitely presented. To state this result formally, we borrow some terminology from Easo and Hutchcroft's paper. Let $G$ be a group with finite generating set $S$, so that $G\cong F_S / R$ for some normal subgroup $R$ of the free group $F_S$. For each $n \in\N$, let $R_n$ be the set of words of length at most $2^n$ in the free group $F_S$ that are equal to the identity in $G$, and let $\langle R_n \rangle^{F_S}$ be the normal subgroup of $F_S$ generated by $R_n$. Say that $(G,S)$ has a \emph{new relation on scale $n$} if $\langle R_{n+1} \rangle^{F_S}\ne\langle R_n \rangle^{F_S}$. We show that for each $K\ge1$ there exist $n_0=n_0(K)\in\N$ such that if $G$ is a group and $S$ is a finite symmetric generating set for $G$ containing the identity and satisfying $|S^{3n}|\le K|S^n|$ for some integer $n\ge n_0$ then
\[
\#\Bigl\{m\in \mathbb{N} : m\ge \log_2n \text{ and $(G,S)$ has a new relation on scale $m$} \Bigr\}\ll_{K}1
\]
(Easo and Hutchcroft's result was the same but with the upper bound depending on $|S|$ as well as $K$).

\item Trofimov \cite{trof} has famously shown that an arbitrary vertex-transitive graph $\G$ of polynomial growth is quasi-isometric to a virtually nilpotent Cayley graph. In a previous paper of ours \cite{ttTrof}, we proved a finitary refinement of Trofimov's theorem \cite[Theorem 2.3]{ttTrof} under the hypothesis $\beta_\G(3n)\le K\beta_\G(n)$.
\end{itemize}

Using either the non-abelian analogues of the Plünnecke--Ruzsa inequalities or the trivial observation that if $A$ is a $K$-approximate group then $|A^m|\le K^{m-1}|A|$, one can also easily bound the quantities $|S^{mn}|$, $\mu(S^{mn})$ or $\beta_\G(mn)$ for $m>3$ in the above results. In other work \cite{ttBalls} we provide (using a far more involved argument) bounds on higher powers that are much stronger for large values of $n$ (and in fact, in the case of \cref{thm:doubling=>tripling} the result of Breuillard and the second author listed just above already gives stronger bounds for large $n$).

It is natural to ask whether the hypotheses of \cref{thm:doubling=>tripling,thm:doubling.lc} can be weakened still further.
\begin{conjecture}
Let $K\ge1$ and $\eps>0$. Suppose $G$ is a locally compact group with Haar measure $\mu$, and $S\subseteq G$ is a precompact symmetric open set containing the identity such that $\mu(S^{\lceil(1+\eps)n\rceil})\le K\mu(S^n)$ for some integer $n$ that is sufficiently large depending on $K$ and $\eps$. Then $\mu(S^{3n})\ll_{K,\eps}\mu(S^n)$.
\end{conjecture}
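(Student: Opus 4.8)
The conjecture is trivial unless $\eps\in(0,1)$: if $\eps\ge1$ then $\lceil(1+\eps)n\rceil\ge2n$, so $\mu(S^{2n})\le\mu(S^{\lceil(1+\eps)n\rceil})\le K\mu(S^n)$ and \cref{thm:doubling.lc} applies verbatim; we therefore assume $0<\eps<1$. The natural plan is to follow the same high-level strategy that underlies \cref{thm:doubling.lc}: use the growth hypothesis together with a pigeonhole over sub-scales and the Ruzsa-type inequality $\mu(S^{a+c})\mu(S^b)\le\mu(S^{a+b})\mu(S^{b+c})$ to produce a single scale $m$ with $\delta n\le m\le n$ — for some $\delta=\delta(K,\eps)>0$ — at which one has genuine tripling $\mu(S^{3m})\le C_0\mu(S^m)$, with $C_0=C_0(K,\eps)$. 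Granting such an $m$, \cref{lem:tripling->AG} makes $S^{2m}$ a $C_0^{O(1)}$-approximate group, so $\mu(S^{3n})\le\mu\bigl((S^{2m})^{\lceil3/\delta\rceil}\bigr)\le C_0^{O(1/\delta)}\mu(S^{2m})\le C_0^{O(1/\delta)}\mu(S^{3m})\le C_0^{O(1/\delta)}\mu(S^n)$, using $3n\le2m\lceil3/\delta\rceil$, the approximate-group bound, $S^{2m}\subseteq S^{3m}$, and $\mu(S^{3m})\le C_0\mu(S^m)\le C_0\mu(S^n)$. This ``transfer back to scale $n$'' is routine and costs only a factor depending on $K$ and $\eps$; the entire content of the conjecture lies in producing the good scale $m$.

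For that, partition $\{n,n+1,\dots,\lceil(1+\eps)n\rceil\}$ into $J=\lceil\log_2K\rceil$ consecutive blocks, each of width $w\asymp\eps n/\log K$. The product over all blocks of the consecutive ratios $\mu(S^{k+1})/\mu(S^k)$ equals $\mu(S^{\lceil(1+\eps)n\rceil})/\mu(S^n)\le K$, so some block $[p,p+w]$, with $p\in[n,2n]$, satisfies $\mu(S^{p+w})\le2\mu(S^p)$; and $w\gg1$ since $n$ is large in terms of $K$ and $\eps$. The difficulty is that this is only \emph{mixed} small doubling — the large set $S^p$ barely expands when multiplied by the much smaller set $S^w$. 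In the case $\eps\ge1$ one has instead the genuine self-doubling $\mu(S^{2n})\le K\mu(S^n)$ of $S^n$ by itself, and it is self-doubling that the structure theory exploits; a mixed bound $\mu(S^pS^w)\le2\mu(S^p)$ yields only the Ruzsa-covering conclusion $S^w\subseteq TS^{2p}$ with $|T|\le2$, which is vacuous, and it does not obviously iterate to a self-doubling $\mu(S^{2q})\le O_{K,\eps}(1)\mu(S^q)$ at any scale $q$. Worse still, slow growth over a single width-$w$ window sitting near scale $n$ carries no a priori information about how $S$ grows at scales below $n$, so one cannot simply ``descend'' to a smaller scale.

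I expect this last point to be the main obstacle, and it is presumably why the statement is only a conjecture. Overcoming it seems to require a genuinely new ingredient — a form of \emph{downward propagation of doubling}, namely that slow growth on $[n,(1+\eps)n]$ forces $\mu(S^{2m})\le O_{K,\eps}(1)\mu(S^m)$ for some (indeed, most) $m\in[\delta n,n]$. This does hold for the model objects, coset-nilprogressions, in which the growth exponent is essentially constant across scales; so one plausible route is to show that $(1+\eps)$-growth at scale $n$ already suffices to run a variant of the Breuillard--Green--Tao structure theory for balls, to identify $S^n$ up to bounded error with a coset-nilprogression of complexity $O_{K,\eps}(1)$, and to read off tripling at scale $n$ directly; this would presumably give a bound of the shape $\exp(\exp(O_\eps(K^2)))$, with an unspecified (and possibly bad) dependence on $\eps$. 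The sticking point is that the structure theorems currently available for balls take a self-doubling hypothesis $\mu(S^{2n})\le K\mu(S^n)$ as their input, and bootstrapping $(1+\eps)$-growth at a single scale up to self-doubling at any scale appears to be about as hard as the conjecture itself.
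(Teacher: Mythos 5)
This statement is stated in the paper as an open conjecture; the paper gives no proof of it, so there is nothing to compare your attempt against except the surrounding discussion. Read as a proof, your proposal has a gap, but it is one you yourself identify candidly: everything hinges on producing a scale $m\in[\delta n,n]$ with genuine self-tripling $\mu(S^{3m})\le C_0(K,\eps)\mu(S^m)$, and no such $m$ is ever produced. Your reduction of the case $\eps\ge1$ to \cref{thm:doubling.lc} is correct, and your ``transfer back to scale $n$'' step is sound granting the good scale $m$ (via \cref{lem:tripling->AG} and the bound $\mu(A^j)\le K'^{j-1}\mu(A)$ for a $K'$-approximate group $A$). The pigeonhole over blocks is also correct as far as it goes, and your diagnosis of why it fails is accurate: it yields only the mixed bound $\mu(S^pS^w)\le2\mu(S^p)$ with $w$ much smaller than $p$, which gives no control on the doubling of $S^p$ by itself and no information at scales below $n$. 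This missing ``downward propagation of doubling'' is precisely the content of the conjecture, which the authors explicitly leave open (noting that even weakening $\mu(S^{2n})\le K\mu(S^n)$ to $\mu(S^{2n-1})\le K\mu(S^n)$ is unresolved, and that the hypothesis cannot be weakened all the way to $\mu(S^{n+1})\le K\mu(S^n)$ because any finite set of cardinality at most $K$ satisfies that for all $n$). So your submission is an honest partial reduction plus an analysis of the obstruction, not a proof; as a review matter, the conjecture remains unproved by it.
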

Natural partial results to aim for would be to replace $\mu(S^{\lceil(1+\eps)n\rceil})\le K\mu(S^n)$ with something stronger like $\mu(S^{2n-1})\le K\mu(S^n)$, or to consider the special case of finitely generated $G$. Note that the hypothesis $\mu(S^{\lceil(1+\eps)n\rceil})\le K\mu(S^n)$ cannot be weakened to e.g. $\mu(S^{n+1})\le K\mu(S^n)$, since this is satisfied for example for all $n$ by any finite set $S$ of cardinality at most $K$. At this point we would like to acknowledge that Breuillard has asked questions along these lines in the past.


\section{Approximating sets of small doubling by approximate groups}
In order to prove \cref{thm:doubling.lc}, we make use of the following result, which was one of the main motivations for the study of approximate groups in the first place.
\begin{prop}[Tao \cite{tao.product.set}]\label{prop:doubling.covered.by.app.grp-lc}
Let $G$ be a locally compact group with Haar measure $\mu$, and let $K\ge1$. Suppose $A\subseteq G$ is a precompact symmetric open set containing the identity and satisfying $\mu(A^2)\le K\mu(A)$. Then there exists a precompact open $2^{12}K^{24}$-approximate group $U\subseteq A^4$ with measure $\mu(U)\le 4K^5\mu(A)$ and a finite set $X\subseteq A$ of cardinality at most $4K^4$ such that $A\subseteq XU$.
\end{prop}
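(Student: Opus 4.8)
The plan is to realise the desired approximate group as a symmetry set of $A$. Rescaling the Haar measure (harmlessly, since all the conclusions are scale-invariant) we may assume $\mu(A)=1$. Consider $f\colon G\to[0,1]$, $f(g)=\mu(A\cap gA)$: it is lower semicontinuous (as $A$ is open), supported on the precompact set $AA^{-1}=A^2$, satisfies $f(g^{-1})=f(g)$ and $\|f\|_\infty=f(1)=1$, and obeys $\int_G f\,d\mu=\mu(A)^2=1$. This last identity — like all the integral identities below — follows from Fubini's theorem and the elementary computation $\int_G\mathbf{1}_{gB}(x)\,d\mu(g)=\mu(B^{-1})=\mu(B)$, valid for symmetric $B$; this is the point at which symmetry of $A$ keeps the modular function out of the argument entirely. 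I then take $S:=\{g\in G:f(g)>\tfrac1{2K}\}$, a symmetric open precompact set with $1\in S$ and $S\subseteq A^2$. From $S\subseteq A^2$ we get $\mu(S)\le\mu(A^2)\le K$, and from $1=\int_G f\le\mu(S)\cdot 1+\mu(A^2)\cdot\tfrac1{2K}\le\mu(S)+\tfrac12$ we get $\mu(S)\ge\tfrac12$.

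The crux is to bound $\mu(SA)$, $\mu(AS)$ and $\mu(ASA)$. These sets lie in $A^3$, whose measure is \emph{not} controlled by the hypothesis (this failure being exactly why small doubling does not imply small tripling), so one must use that $S$ is a symmetry set, as follows. If $g=a s a'$ with $a,a'\in A$ and $s\in S$, then left-translating the set $sA\cap A$ (of measure $>\tfrac1{2K}$) by $a$ gives $\mu\bigl(g(a'^{-1}A)\cap aA\bigr)>\tfrac1{2K}$, and since $aA,\,a'^{-1}A\subseteq A^2$ this yields $\mu(gA^2\cap A^2)>\tfrac1{2K}$; the same idea, with one of the two occurrences of $A^2$ replaced by $A$, gives $\mu(gA^2\cap A)>\tfrac1{2K}$ for $g\in SA$ and $\mu(gA\cap A^2)>\tfrac1{2K}$ for $g\in AS$. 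Integrating these pointwise bounds and using $\int_G\mu(gA^2\cap A^2)\,d\mu(g)=\mu(A^2)^2\le K^2$ and $\int_G\mu(gA^2\cap A)\,d\mu(g)=\int_G\mu(gA\cap A^2)\,d\mu(g)=\mu(A^2)\le K$, I conclude $\mu(SA),\,\mu(AS)\le 2K^2$ and $\mu(ASA)\le 2K^3$.

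Now I apply the non-abelian Ruzsa triangle inequality $\mu(XZ^{-1})\mu(Y)\le\mu(XY^{-1})\mu(YZ^{-1})$ with $Y=A$ (so $\mu(Y)=1$), successively obtaining
\[
\mu(S^2)\le\mu(SA)\mu(AS)\le 4K^4,\qquad \mu(AS^2)\le\mu(ASA)\mu(AS)\le 4K^5,\qquad \mu(S^3)\le\mu(SA)\mu(AS^2)\le 8K^7,
\]
the last of which gives the small-tripling bound $\mu(S^3)\le 16K^7\mu(S)$. By the standard implication that small tripling produces an approximate group (\cref{lem:tripling->AG}, in its Haar-measure form), $U:=S^2$ is then an $O(K^{O(1)})$-approximate group; it is open and precompact, it is contained in $A^2\cdot A^2=A^4$, and $\mu(U)=\mu(S^2)\le 4K^4\le 4K^5$. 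Finally, since $\mu(AS)\le 2K^2\le 4K^2\mu(S)$, the Ruzsa covering lemma yields a finite $X\subseteq A$ with $|X|\le 4K^2\le 4K^4$ and $A\subseteq X\cdot SS^{-1}=XS^2=XU$. The only work left is to chase the constants through \cref{lem:tripling->AG} carefully enough to reach the stated $2^{12}K^{24}$, $4K^5$ and $4K^4$. The single conceptually nontrivial step — the one I expect to be the main obstacle — is the boundedness of $\mu(SA)$, $\mu(AS)$, $\mu(ASA)$ in the second paragraph: that small doubling of $A$ gives no control on $A^3$ yet does control these "pinned" product sets is the whole game, and everything afterwards is bookkeeping with the Ruzsa triangle and covering lemmas.
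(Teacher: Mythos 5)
Your proof is correct, and its skeleton --- the symmetry set $S=\{g:\mu(A\cap gA)>\mu(A)/2K\}$, small tripling of $S$, \cref{lem:tripling->AG}, and Ruzsa covering --- is the same as the paper's (both are adaptations of Tao's argument to left Haar measure). But you handle the technical core differently. The paper's key step is \cref{prop:tao.4.5}, which bounds $\mu(AV^nA)\le2^nK^{2n+1}\mu(A)$ for \emph{every} $n$ by an iterated change of variables inside an $n$-fold convolution, and then uses $V^n\subseteq AV^nA$; you instead bound only the three pinned sets $\mu(SA),\mu(AS),\mu(ASA)$ by a direct pointwise-bound-plus-integration argument and then reach $\mu(S^2),\mu(AS^2),\mu(S^3)$ via the Ruzsa triangle inequality. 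Both routes work, but yours trades the induction for an appeal to the triangle inequality, and you should verify that this inequality survives the passage from Tao's bi-invariant setting to a general left Haar measure: it does, because the only invariance its proof needs is $\mu(xY^{-1})=\mu(Y)$ for the \emph{middle} set $Y$, and you always take $Y=A$ symmetric --- the same care the paper takes in restricting Tao's other lemmas to symmetric sets. Two of your intermediate estimates are in fact sharper than the paper's: your bound $\int_{G\setminus S}f\le\mu(A^2)/2K\le\mu(A)/2$ gives $\mu(S)\ge\mu(A)/2$, whereas the paper's Cauchy--Schwarz/energy argument gives only $\mu(V)\ge\mu(A)/2K$; consequently your tripling constant $16K^7$ beats the paper's $16K^8$ and your final constants $2^{12}K^{21}$, $4K^4$ and $4K^2$ come in under the stated $2^{12}K^{24}$, $4K^5$ and $4K^4$, so the constant-chasing you defer at the end genuinely closes.
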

At first sight, this is just a special case of \cite[Theorem 4.6]{tao.product.set} with bounds written more explicitly. However, in Tao's paper there is a global assumption that the Haar measure is bi-invariant, and in order to deduce \cref{cor:trans} from \cref{thm:doubling.lc} we need this result for arbitrary left-Haar measures, not just bi-invariant ones. The main aim of this section is therefore to prove \cref{prop:doubling.covered.by.app.grp-lc} for general Haar measures, essentially following Tao but restricting some of the ingredients to symmetric sets in order to avoid his bi-invariance assumption.

We remark that slightly better bounds are available in the discrete setting, as follows.
\begin{prop}[{\cite[Theorem 2.5.6]{book}}]\label{prop:doubling.covered.by.app.grp}
Let $K\ge1$, and suppose $A$ is a finite subset of a group satisfying $|A^2|\le K|A|$. Then there exists an $3^9K^{18}$-approximate group $U\subseteq A^2$ and a set $X\subseteq A$ of size at most $K^2$ such that $A\subseteq XU$.
\end{prop}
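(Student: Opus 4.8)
The plan is to follow Tao's argument for the locally compact analogue stated just above, specialised from Haar measure to counting measure --- which is automatically bi-invariant, so the symmetrisation subtleties of the next section do not arise. Indeed, a weaker form of the statement (with $U\subseteq A^4$, and with $2^{12}K^{24}$ and $4K^4$ in place of $3^9K^{18}$ and $K^2$) follows at once from \cref{prop:doubling.covered.by.app.grp-lc} applied with $G$ discrete, so the real task is to extract the sharper exponents and the containment $U\subseteq A^2$ by redoing the argument and cashing in on the savings available when cardinalities are integers and no ``approximately open'' fudging is needed. First I would assemble the elementary Ruzsa calculus used throughout: the Ruzsa triangle inequality, which promotes $|A^2|\le K|A|$ to estimates such as $|AA^{-1}|,\,|A^{-1}A|\le K^2|A|$, and Ruzsa's covering lemma, in the form that $|BC|\le M|C|$ implies $B\subseteq YCC^{-1}$ for some $Y$ with $|Y|\le M$. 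Since the result is only ever applied in this paper with $A=S^n$, I would also just assume throughout that $A$ is symmetric and contains the identity, so that $AA^{-1}=A^{-1}A=A^2$; then any symmetric subset of $A^2$, or of $A'(A')^{-1}$ for $A'\subseteq A$, lies in $A^2$ as required.

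The substantive step is to produce an approximate group $U\subseteq A^2$ that $K^{O(1)}$-controls $A$, and this cannot be done by Ruzsa calculus alone, since a bound on $|A^2|$ does not bound $|A^3|$. The input that does the real work is the multiplicative-energy bound $\sum_{g}|gA\cap A|^2=\|1_A*1_{A^{-1}}\|_2^2\ge|A|^4/|A^2|\ge|A|^3/K$, which comes from Cauchy--Schwarz applied to the identity $\sum_g|gA\cap A|=|A|^2$ (whose sum is supported on $AA^{-1}=A^2$). Feeding this into a Balog--Szemer\'edi--Gowers-type argument extracts a subset $A'\subseteq A$ with $|A'|\ge|A|/K^{O(1)}$ whose relevant iterated product and quotient sets have size $K^{O(1)}|A'|$; from such an $A'$ one gets the approximate group, either by invoking the classical fact that small tripling implies control by an approximate group, or by passing to a symmetry set of $A'$ --- using the elementary quasi-multiplicativity $\{g:|gA'\cap A'|\ge(1-s)|A'|\}\cdot\{g:|gA'\cap A'|\ge(1-t)|A'|\}\subseteq\{g:|gA'\cap A'|\ge(1-s-t)|A'|\}$ --- and then using a dyadic pigeonhole over the threshold together with a greedy packing argument to pin down a scale at which this set is a genuine $K^{O(1)}$-approximate group $U\subseteq A'(A')^{-1}\subseteq A^2$. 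Finally, since $|U|\ge|A|/K^{O(1)}$, one last application of Ruzsa's covering lemma covers $A$ by at most $K^2$ left translates of $U$.

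The main obstacle I anticipate is this middle step: small doubling genuinely does not imply small tripling, so no purely Ruzsa-theoretic manipulation can conjure the approximate group, and one must invest in the Balog--Szemer\'edi--Gowers machinery (or an equivalent almost-periodicity argument) to extract the well-controlled subset $A'$. Everything after that is bookkeeping --- but bookkeeping that has to be done with care here, since the whole point of the discrete statement is the explicit constants: each use of the covering lemma or the triangle inequality costs a bounded power of $K$, and tracking those powers, rather than writing $K^{O(1)}$, is what delivers $3^9K^{18}$ for the approximate-group parameter and $K^2$ for the size of $X$.
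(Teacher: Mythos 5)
Your outer skeleton (Cauchy--Schwarz energy bound, extraction of an approximate group, Ruzsa covering at the end) matches the proof in the cited reference, which is just the discrete version of the argument reproduced in Section 2 of this paper. But the step you yourself flag as the main obstacle is exactly where the proposal breaks down: neither of the two mechanisms you offer for converting the energy bound into an approximate group is the one that works, and the one that does work is absent. The actual argument takes the symmetry set $V=\{g:|A\cap gA|>|A|/2K\}$ at threshold $1/2K$, uses the energy bound only to show $|V|\ge|A|/2K$, and then proves the key estimate $|AV^nA|\le2^nK^{2n+1}|A|$ by the change-of-variables/path-counting argument of \cref{prop:tao.4.5}: every element of $AV^nA$ has at least $(|A|/2K)^n$ representations as a product of $n+1$ elements of $A^2$, so $|AV^nA|\cdot(|A|/2K)^n\le|A^2|^{n+1}$. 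This yields $|V^3|\le|AV^3A|\le K^{O(1)}|V|$, whence $V^2$ is the desired approximate group by \cref{lem:tripling->AG}; no Balog--Szemer\'edi--Gowers input is needed, and this estimate is the technical heart of the proof. Your proposal never supplies it.

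As for your two substitutes: the quasi-multiplicativity you state is an instance of $\{g:|gA\cap A|\ge\alpha|A|\}\cdot\{g:|gA\cap A|\ge\beta|A|\}\subseteq\{g:|gA\cap A|\ge(\alpha+\beta-1)|A|\}$, which is vacuous once $\alpha+\beta\le1$; it is therefore only useful at thresholds close to $1$, and at such thresholds the symmetry sets of a set of doubling $K$ need not have size $|A|/K^{O(1)}$ (for the box $\{0,\dots,N\}^d\subseteq\mathbb{Z}^d$, with $K\approx2^d$, the set $\{g:|A\cap(g+A)|\ge\tfrac{2}{3}|A|\}$ has size roughly $|A|/(\log_2K)!$, which is superpolynomially small in $K$). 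The threshold that is guaranteed to produce a large set is $1/2K$, precisely where quasi-multiplicativity gives nothing. The BSG route would prove \emph{something}, but it is a far heavier theorem (itself built on a path-counting argument at least as involved as the one above) and cannot deliver the stated constants; nor does your final step as written, since Ruzsa covering bounds $|X|$ by $|AU|/|U|$, and $|U|\ge|A|/K^{O(1)}$ with an unspecified exponent gives a correspondingly unspecified bound, not $K^2$.
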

The bounds are not stated this explicitly in the reference, but can easily be read out of the proof (including the exponent $18$ in the approximate-group parameter, which is erroneously stated as $24$ in the reference).

To produce the approximate group $U$ required by \cref{prop:doubling.covered.by.app.grp-lc} we use the following result, which also has its origins Tao's paper \cite{tao.product.set}.
\begin{lemma}[{\cite[Proposition 6.1]{ttTrof}}]\label{lem:tripling->AG}
Let $G$ be a locally compact group with Haar measure $\mu$, and let $K\ge1$. Suppose $A\subseteq G$ is a precompact symmetric open set such that $\mu(A^3)\le K\mu(A)$. Then $A^2$ is a precompact open $K^3$-approximate group.
\end{lemma}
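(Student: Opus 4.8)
The plan is to upgrade the tripling hypothesis to the measure bound $\mu(A^5)\le K^3\mu(A)$, and then to read a covering of $(A^2)^2=A^4$ by at most $K^3$ left translates of $A^2$ off a greedy packing argument. We may assume $A\ne\emptyset$, so that $0<\mu(A)<\infty$ and $A^2$ is automatically a precompact open symmetric set containing the identity; all the content lies in the covering.

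The technical heart is a version of the Ruzsa triangle inequality that survives the absence of bi-invariance: if $P,Q,R\subseteq G$ are precompact open sets and $Q$ is symmetric, then $\mu(PR^{-1})\,\mu(Q)\le\mu(PQ)\,\mu(QR^{-1})$. To prove this I would write $\mu(PQ)\,\mu(QR^{-1})=\int_G\int_G 1_{PQ}(u)\,1_{QR^{-1}}(v)\,d\mu(u)\,d\mu(v)$, substitute $v=u^{-1}g$ (legitimate since $\mu$ is left-invariant), and apply Fubini to rewrite the right-hand side as $\int_G F(g)\,d\mu(g)$, where $F(g)=\int_G 1_{PQ}(u)\,1_{QR^{-1}}(u^{-1}g)\,d\mu(u)$. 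For $g\in PR^{-1}$, fix a representation $g=pr^{-1}$ with $p\in P$ and $r\in R$; then for every $q\in Q$ the element $u=pq$ lies in $PQ$ and satisfies $u^{-1}g=q^{-1}r^{-1}\in Q^{-1}R^{-1}=QR^{-1}$, so $F(g)\ge\mu(pQ)=\mu(Q)$ by left invariance. Integrating over $g\in PR^{-1}$ gives the inequality. Symmetry of $Q$ is used exactly once, to know that $q^{-1}\in Q$ so that $u^{-1}g\in QR^{-1}$; for a general $Q$ the argument only delivers $F(g)\ge\mu(Q\cap Q^{-1})$.

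Granting this, all powers of the symmetric set $A$ are symmetric, so applying the inequality with $(P,Q,R)=(A^2,A,A^2)$ gives $\mu(A^4)\,\mu(A)\le\mu(A^3)^2\le K^2\mu(A)^2$, hence $\mu(A^4)\le K^2\mu(A)$, and applying it again with $(P,Q,R)=(A^3,A,A^2)$ gives $\mu(A^5)\,\mu(A)\le\mu(A^4)\,\mu(A^3)\le K^3\mu(A)^2$, hence $\mu(A^5)\le K^3\mu(A)$. Now I would choose $X\subseteq A^4$ maximal subject to the translates $\{xA:x\in X\}$ being pairwise disjoint. Then $\bigsqcup_{x\in X}xA\subseteq A^5$ and $\mu(xA)=\mu(A)$, so $|X|\,\mu(A)\le\mu(A^5)\le K^3\mu(A)$ and $X$ is finite with $|X|\le K^3$; maximality gives, for every $a\in A^4$, some $x\in X$ with $aA\cap xA\ne\emptyset$, whence $a\in xAA^{-1}=xA^2$. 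Thus $A^4\subseteq XA^2$, and since $A^2$ is a precompact open symmetric set containing the identity whose square is covered by at most $K^3$ left translates of it, it is a precompact open $K^3$-approximate group.

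The main obstacle is establishing this triangle inequality: it is classically proved assuming the Haar measure is bi-invariant (inversion then preserves it, which is what lets the standard injection argument close), and securing it for a merely left-invariant $\mu$ is precisely what forces the restriction to symmetric sets — and hence, in the end, the role of the hypothesis that $A$ is symmetric. Everything after that — the Ruzsa covering lemma and the bookkeeping of exponents — uses only left-translation invariance and is routine.
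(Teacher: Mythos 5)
Your proof is correct, and it follows essentially the same route as the source the paper cites for this lemma (\cite[Proposition 6.1]{ttTrof}, which the present paper quotes without reproving): a Ruzsa-type triangle inequality made to work for a merely left-invariant Haar measure by exploiting the symmetry of the middle set, iterated to get $\mu(A^5)\le K^3\mu(A)$, followed by the standard maximal-disjoint-translates covering argument to cover $A^4$ by at most $K^3$ left translates of $A^2$. The one point worth flagging is cosmetic: for $A=\varnothing$ the conclusion is vacuous/degenerate, and your explicit reduction to $A\ne\varnothing$ (so that $A^2$ contains the identity) is the right way to handle it.
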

The technical heart of the proof of \cref{prop:doubling.covered.by.app.grp-lc} is then the following result, which is non-bi-invariant version of \cite[Proposition 4.5]{tao.product.set}.
\begin{prop}\label{prop:tao.4.5}
Let $G$ be a locally compact group with Haar measure $\mu$, and let $K\ge1$. Suppose $A\subseteq G$ is a non-empty precompact symmetric open set satisfying $\mu(A^2)\le K\mu(A)$. Then there exists a precompact symmetric open set $V\subseteq A^2$ containing the identity such that $\mu(V)\ge\mu(A)/2K$ and $\mu(AV^nA)\le2^nK^{2n+1}\mu(A)$ for all $n\in\N$.
\end{prop}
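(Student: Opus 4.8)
The plan is to follow Tao's strategy, taking for $V$ the \emph{popular quotient set}
\[
V:=\{x\in G:\mu(xA\cap A)>\mu(A)/2K\}.
\]
First I would record the elementary properties of $V$. The function $x\mapsto\mu(xA\cap A)$ is continuous (it equals $\langle{}_x\mathbf 1_A,\mathbf 1_A\rangle_{L^2(\mu)}$, and left translation is strongly continuous on $L^2(\mu)$), so $V$ is open; it is symmetric since $\mu(x^{-1}A\cap A)=\mu(A\cap xA)$ by left-invariance; it contains the identity because $\mu(A)>\mu(A)/2K$; and if $\mu(xA\cap A)>0$ then $xA\cap A\neq\emptyset$, so $x\in AA^{-1}=A^2$ (using that $A$ is symmetric), whence $V\subseteq A^2$ is precompact. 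For the lower bound on $\mu(V)$ I would use the identity $\int_G\mu(xA\cap A)\,d\mu(x)=\mu(A)\mu(A^{-1})=\mu(A)^2$ (a one-line Fubini computation using left-invariance and $A^{-1}=A$), and split the integral: since the integrand vanishes off $A^2$, $\mu(A)^2\le\mu(A)\mu(V)+\tfrac1{2K}\mu(A)\mu(A^2)\le\mu(A)\mu(V)+\tfrac12\mu(A)^2$, i.e.\ $\mu(V)\ge\mu(A)/2\ge\mu(A)/2K$.

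The substance is the recursion $\mu(AV^{n+1}A)\le 2K^2\mu(AV^nA)$ for all $n\ge0$; granting it, and using $\mu(AV^0A)=\mu(A^2)\le K\mu(A)$, one gets $\mu(AV^nA)\le(2K^2)^n\mu(A^2)\le 2^nK^{2n+1}\mu(A)$, which is the claim. To prove the recursion, fix (by a measurable-selection theorem applied to the Borel set $\{(w,u,v,b):w=uvb,\,u\in AV^n,\,v\in V,\,b\in A\}$) Borel maps $u,v,b$ on the open set $AV^{n+1}A$ with $w=u(w)v(w)b(w)$ and $u(w)\in AV^n$, $v(w)\in V$, $b(w)\in A$, extending $u$ measurably to all of $G$. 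Consider $\Phi:G\times G\to G\times G$, $\Phi(w,c):=(u(w)c,\ c^{-1}u(w)^{-1}w)$: this is a Borel bijection with inverse $(p,q)\mapsto(pq,\ u(pq)^{-1}p)$, and — using \emph{only} the left-invariance of $\mu$, twice (substitute $p=u(w)c$ inside, then $q=p^{-1}w$ outside) — it is measure preserving. Put $D:=\{(w,c):w\in AV^{n+1}A,\,c\in A,\,v(w)^{-1}c\in A\}$. On one hand, by Fubini and $v(w)\in V$,
\[
(\mu\times\mu)(D)=\int_{AV^{n+1}A}\mu(A\cap v(w)A)\,d\mu(w)\ \ge\ \tfrac{\mu(A)}{2K}\,\mu(AV^{n+1}A).
\]
On the other hand, for $(w,c)\in D$ we have $\Phi(w,c)=\bigl(u(w)c,\ (v(w)^{-1}c)^{-1}b(w)\bigr)\in AV^nA\times A^2$, because $u(w)c\in AV^n\!\cdot A$, $v(w)^{-1}c\in A$ (so $(v(w)^{-1}c)^{-1}\in A^{-1}=A$), and $b(w)\in A$; hence $(\mu\times\mu)(D)=(\mu\times\mu)(\Phi(D))\le\mu(AV^nA)\mu(A^2)\le K\mu(A)\,\mu(AV^nA)$. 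Comparing the two estimates yields $\mu(AV^{n+1}A)\le 2K^2\mu(AV^nA)$.

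The point I expect to be most important to get right is the choice of the map $\Phi$: a naive appeal to the Ruzsa triangle inequality only gives the submultiplicative bound $\mu(AV^{m+n}A)\mu(A)\le\mu(AV^mA)\mu(AV^nA)$, which together with any bound on $\mu(AVA)$ is too weak, whereas $\Phi$ converts the ``popularity'' of $v(w)$ directly into the linear recursion $\times2K^2$. The second point is that $\Phi$ must be measure preserving for a \emph{general} left-Haar measure, not just a bi-invariant one — this is exactly where restricting to symmetric sets pays off, since the symmetry of $A$ is what lets us both place $V$ inside $A^2$ and run the bookkeeping (the identities $\mu(A^{-1})=\mu(A)$ and $(v^{-1}c)^{-1}\in A$) without any bi-invariance. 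The remaining ingredients — continuity of $x\mapsto\mu(xA\cap A)$, the existence of the measurable selection, and the Fubini computations — are routine.
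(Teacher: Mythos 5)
Your construction of $V$ matches the paper's, but both halves of your argument then take genuinely different routes, and both are correct in substance. For the measure of $V$, the paper runs a second-moment argument (multiplicative energy plus Cauchy--Schwarz); your first-moment argument --- splitting $\int_{A^2}\mu(xA\cap A)\,d\mu(x)=\mu(A)^2$ over $V$ and $A^2\setminus V$ and using $\mu(A^2)\le K\mu(A)$ --- is simpler and in fact yields the stronger bound $\mu(V)\ge\mu(A)/2$. For the growth bound, the paper proves $\mu(AV^nA)\le 2^nK^{2n+1}\mu(A)$ in one shot by integrating the choice-free function $(x,y_0,\dots,y_{n-1})\mapsto 1_{A^2}(y_{n-1}^{-1}\cdots y_0^{-1}x)$ over $AV^nA\times(A^2)^n$, lower-bounding the inner integral pointwise via an arbitrary decomposition of each $x$; you instead isolate the same mechanism as a linear recursion $\mu(AV^{n+1}A)\le 2K^2\mu(AV^nA)$ via the explicit bijection $\Phi$, whose measure-preservation you correctly verify using only left-invariance (applied twice) --- this is exactly where bi-invariance is avoided, just as in the paper, and the recursion formulation is arguably more transparent.

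The one point that is a genuine gap in the stated generality is the measurable selection of $(u(w),v(w),b(w))$. Your sets $D$ and $\Phi(D)$, and the Fubini computations, all require $u$ and $v$ to be measurable maps on $AV^{n+1}A$, but the proposition is asserted for arbitrary locally compact groups, which need not be second countable; the selection theorems you would invoke (Kuratowski--Ryll-Nardzewski, Jankov--von Neumann) operate in the Polish or standard Borel setting, and the usual reduction of a general locally compact group to that setting via Kakutani--Kodaira does not interact cleanly with the particular sets $A$ and $V$. This is precisely what the paper's formulation is engineered to sidestep: its integrand does not depend on the chosen decomposition, so the decompositions are used only pointwise in a lower bound and never need to be measurable in $x$. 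Your argument is repairable in the same spirit --- replace the selected triple by an average over all decompositions of $w$, which essentially collapses back to the paper's $n$-fold integral --- and is fine verbatim for second-countable $G$, but as written the selection step does not cover the full generality claimed.
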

Before we begin the proof of \cref{prop:tao.4.5}, let us recall some notation and basic facts from Tao's paper. Given two absolutely integrable functions $f,g:G\to\R$, we define as usual the \emph{convolution} $f\ast g$ via
\[
f\ast g(x)=\int_Gf(y)g(y^{-1}x)\,d\mu(y).
\]
Given two non-empty open precompact sets $A,B\subseteq G$, we then define the \emph{multiplicative energy} $E(A,B)$ of $A$ and $B$ via
\[
E(A,B)=\int_G1_A\ast1_B(x)^2\,d\mu(x).
\]
By Fubini's theorem and a change of variables we have
\begin{equation}\label{eq:tao6}
\int_G1_A\ast1_B(x)\,d\mu(x)=\mu(A)\mu(B).
\end{equation}
Since $1_A\ast1_B$ is supported on $AB$, combining this with Cauchy--Schwarz then gives
\begin{equation}\label{eq:tao9}
E(A,B)\ge\frac{\mu(A)^2\mu(B)^2}{\mu(AB)}.
\end{equation}
\begin{lemma}\label{lem:1A*1B}
Let $G$ be a locally compact group with Haar measure $\mu$, and suppose $A,B\subseteq G$ are non-empty precompact open sets, with $B$ symmetric. Then $1_A\ast1_B(x)=\mu(A\cap xB)$ for all $x\in G$.
\end{lemma}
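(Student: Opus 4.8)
The plan is to unwind the definition of convolution and exploit the symmetry of $B$. First I would write, directly from the definition,
\[
1_A\ast1_B(x)=\int_G1_A(y)1_B(y^{-1}x)\,d\mu(y),
\]
noting that this integral is finite because $A$ and $B$ are precompact. The one substantive step is the observation that, since $B=B^{-1}$, the condition $y^{-1}x\in B$ is equivalent to $x^{-1}y\in B^{-1}=B$, i.e.\ to $y\in xB$; hence, as functions of $y$, we have $1_B(y^{-1}x)=1_{xB}(y)$. Substituting this into the integrand gives $1_A(y)1_{xB}(y)=1_{A\cap xB}(y)$, so that integrating over $G$ yields $\mu(A\cap xB)$, as required.

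I do not anticipate any real obstacle here. No change of variables or invariance property of $\mu$ is needed for this particular identity — the left-invariance of $\mu$ is of course built into the definition of convolution, but plays no further role. The only points worth keeping in mind are that $xB$ is again precompact and open, so that $A\cap xB$ is a genuine precompact open set of finite measure and all the manipulations are legitimate, and that the symmetry of $B$ is essential: without it the same computation would instead give $1_A\ast1_B(x)=\mu(A\cap xB^{-1})$.
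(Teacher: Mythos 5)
Your proof is correct and is essentially identical to the one in the paper: both unwind the definition of convolution, use the symmetry of $B$ to rewrite $1_B(y^{-1}x)$ as $1_{xB}(y)$, and integrate the resulting indicator of $A\cap xB$. Your closing remarks (that no invariance of $\mu$ is used, and that without symmetry one would get $\mu(A\cap xB^{-1})$) are also accurate.
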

\begin{proof}
Given $x\in G$ we have
\begin{align*}
1_A\ast1_B(x)&=\int_G1_A(y)1_B(y^{-1}x)\,d\mu(y)\\
		&=\int_G1_A(y)1_B(x^{-1}y)\,d\mu(y)\\
		&=\int_G1_A(y)1_{xB}(y)\,d\mu(y)\\
		&=\mu(A\cap xB),
\end{align*}
as required.
\end{proof}
\begin{lemma}\label{lem:1A*1B.continuous}
Let $G$ be a locally compact group with Haar measure $\mu$, and suppose $A,B\subseteq G$ are measurable sets of finite measure. Then the map $G\to\R$, $x\mapsto 1_A\ast1_B(x)$ is continuous.
\end{lemma}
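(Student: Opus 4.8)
The plan is to derive this from the standard functional-analytic fact that the convolution of an $L^1$ function with an $L^\infty$ function is (left) uniformly continuous, exploiting that $1_A\in L^1(\mu)$ since $\mu(A)<\infty$, while $1_B\in L^\infty(\mu)$ trivially. First I would note that $1_A\ast1_B$ makes sense pointwise and is bounded: for each $x\in G$ the integrand $y\mapsto 1_A(y)1_B(y^{-1}x)$ is measurable (as $y\mapsto 1_B(y^{-1}x)$ is the indicator of the measurable set $xB^{-1}$) and dominated by $1_A\in L^1(\mu)$, so $|1_A\ast1_B(x)|\le\mu(A)$, i.e.\ $\|1_A\ast1_B\|_\infty\le\mu(A)<\infty$; more generally $\|f\ast g\|_\infty\le\|f\|_{L^1}\|g\|_{L^\infty}$.

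The main step is the elementary identity $L_a(1_A\ast1_B)=(L_a1_A)\ast1_B$ for $a\in G$, where $L_ah(x)=h(a^{-1}x)$; this is a one-line change of variables $y\mapsto a^{-1}y$ in the defining integral, using only left-invariance of $\mu$. Combining it with the displayed bound above gives
\[
\|L_a(1_A\ast1_B)-1_A\ast1_B\|_\infty=\|(L_a1_A-1_A)\ast1_B\|_\infty\le\|L_a1_A-1_A\|_{L^1}=\mu(aA\bigtriangleup A).
\]
Now $\mu(aA\bigtriangleup A)\to0$ as $a\to e$: this is the usual continuity of translation in $L^1(\mu)$, which one proves by approximating $1_A$ in $L^1$ by a function $f\in C_c(G)$, for which $L_af\to f$ uniformly with supports contained in a fixed compact neighbourhood of the support of $f$ (hence $L_af\to f$ in $L^1$), together with the fact that $L_a$ is an isometry of $L^1(\mu)$. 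Thus $1_A\ast1_B$ is left uniformly continuous, and in particular continuous at every point $x_0$, since writing a nearby point as $bx_0$ with $b\to e$ gives $|1_A\ast1_B(bx_0)-1_A\ast1_B(x_0)|\le\|L_{b^{-1}}(1_A\ast1_B)-1_A\ast1_B\|_\infty\to0$.

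There is no serious obstacle here — the only point that needs a little care, precisely because $G$ is not assumed unimodular, is that one must transfer the translation onto $1_A$ via the \emph{left}-invariant operation $L_a$ rather than onto $1_B$ or via inversion: $L_a$ is an exact isometry of $L^1(\mu)$, whereas right translation and inversion introduce the modular function, which is unbounded in general (and note that $B^{-1}$ need not even have finite measure), and this would obstruct the final limit. Everything else is routine; one could alternatively phrase the same argument in terms of $1_A\ast1_B(x)=\mu(A\cap xB^{-1})$ and continuity of $x\mapsto\mu(A\cap xB^{-1})$, but the $L^1$–$L^\infty$ formulation above is cleanest.
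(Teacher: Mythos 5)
Your proof is correct, and it takes a somewhat different route from the paper's. The paper's argument is two lines: it writes $1_A\ast 1_B(x)$ as an $L^2$ inner product $\langle 1_A,\lambda(x^{-1})1_B\rangle$ (both indicators lie in $L^2(\mu)$ because the sets have finite measure) and then cites the strong continuity of the left regular representation $\lambda$ on $L^2(G)$. You instead use the $L^1$--$L^\infty$ pairing, move the translation onto $1_A$ via the identity $L_a(1_A\ast 1_B)=(L_a1_A)\ast 1_B$, and prove the required continuity of left translation in $L^1(\mu)$ from scratch by $C_c$-approximation. Both arguments ultimately rest on the same standard fact --- strong continuity of left translation on $L^p$ of a left Haar measure --- so neither is more general; what yours buys is self-containedness, the slightly stronger conclusion of uniform continuity, and an explicit (and correct) explanation of why the translation must be placed on the $1_A$ factor as a \emph{left} translation in the non-unimodular setting, a point the paper handles implicitly by applying a left translation to $1_B$ inside the $L^2$ inner product.
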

\begin{proof}
Observe that $1_A\ast1_B(x)=\langle 1_A,\lambda(x^{-1})1_B\rangle$, where $\lambda$ denotes the left regular representation in $L^2(G)$. Hence the statement follows from the continuity of $\lambda$.
\end{proof}
\begin{proof}[Proof of \cref{prop:tao.4.5}]
Following Tao, define
\[
V=\{x\in G:\mu(A\cap xA)>\mu(A)/2K\},
\]
noting that $V\subseteq A^2$ and contains the identity, and is open by \cref{lem:1A*1B,lem:1A*1B.continuous}, symmetric because $\mu(x^{-1}A\cap A)=\mu(A\cap xA)$, and precompact because $V\subseteq A^2$. \cref{lem:1A*1B} and \eqref{eq:tao6} imply that
\[
\int_G\mu(A\cap xA)\,d\mu(x)=\mu(A)^2,
\]
so that
\[
\int_{G\setminus V}\mu(A\cap xA)^2\,d\mu(x)\le\int_G\frac{\mu(A)}{2K}\mu(A\cap xA)\,d\mu(x)\le\frac{\mu(A)^3}{2K}.
\]
On the other hand, we have
\begin{align*}
\int_G\mu(A\cap xA)^2\,d\mu(x)&=\int_G1_A\ast1_A(x)^2\,d\mu(x)&\text{(by \cref{lem:1A*1B})}\\
				&=E(A,A)\\
				&\ge\mu(A)^4/\mu(A^2)&\text{(by \eqref{eq:tao9})}\\
				&\ge\mu(A)^3/K,
\end{align*}
so that
\[
\int_V\mu(A\cap xA)^2\,d\mu(x)\ge\frac{\mu(A)^3}{2K}.
\]
Since $\mu(A\cap xA)\le\mu(A)$, this implies in particular that $\mu(V)\ge\mu(A)/2K$, as required.

Continuing to follow Tao, let $n\in\N$ and $x\in AV^nA$, and write $x=a_0v_1\cdots v_nb_{n+1}$ with $a_0,b_{n+1}\in A$ and each $v_i\in V$. Making the successive changes of variables
\begin{align*}
y_0 &= a_0 b_1\\
y_i &= b_i^{-1} v_i b_{i+1}&(i=1,\ldots,n-1),
\end{align*}
we may write
\[
\int_{(A^2)^n}1_{A^2}(y_{n-1}^{-1}\cdots y_0^{-1}x)\,d\mu(y_0)\cdots d\mu(y_{n-1})=\int_{G^n}1_{A^2}(a_0b_1)\prod_{i=1}^n1_{A^2}(b_i^{-1} v_i b_{i+1})\,d\mu(b_1)\cdots d\mu(b_n).
\]
By symmetry of $A$, if $b_1,\ldots,b_n\in A$ and $v_1^{-1}b_1,\ldots,v_n^{-1}b_n\in A$ then the integrand on the right-hand side is equal to $1$, so we may bound
\begin{align*}
\int_{(A^2)^n}1_{A^2}(y_{n-1}^{-1}\cdots y_0^{-1}x)\,d\mu(y_0)\cdots d\mu(y_{n-1})&\ge\int_{G^n}\prod_{i=1}^n1_{A}(b_i)1_A(v_i^{-1}b_i)\,d\mu(b_1)\cdots d\mu(b_n)\\
	&=\prod_{i=1}^n\mu(A\cap v_iA)\\
	&>\mu(A)^n/(2K)^n,
\end{align*}
the last inequality following from the definition of $V$. Note that $x=y_0\ldots y_n$. Integrating over all $x$ and changing variables, we conclude that
\begin{align*}
\mu(AV^nA)\mu(A)^n/(2K)^n&<\int_{AV^nA}\int_{(A^2)^n}1_{A^2}(y_{n-1}^{-1}\cdots y_0^{-1}x)\,d\mu(y_0)\cdots d\mu(y_{n-1})d\mu(x)\\
			&=\int_{(A^2)^{n+1}}1_{AV^nA}(y_0\cdots y_n)\,d\mu(y_0)\cdots d\mu(y_n)\\
			&\le\mu(A^2)^{n+1}\\
			&\le K^{n+1}\mu(A)^{n+1},
\end{align*}
so that $\mu(AV^nA)\le2^nK^{2n+1}\mu(A)$ as required.
\end{proof}
\begin{proof}[Proof of \cref{prop:doubling.covered.by.app.grp-lc}]
By \cref{prop:tao.4.5} there exists a precompact symmetric open set $V\subseteq A^2$ such that $\mu(V)\ge\mu(A)/2K$ and, since $A$ contains the identity, $\mu(V^2)\le\mu(AV^2A)\le4K^5\mu(A)$ and $\mu(V^3)\le\mu(AV^3A)\le8K^7\mu(A)\le16K^8\mu(V)$. \cref{lem:tripling->AG} then implies that $V^2$ is a precompact open $2^{12}K^{24}$-approximate group. The application of \cref{prop:tao.4.5} also gives $\mu(AV)\le\mu(AVA)\le2K^3\mu(A)\le4K^4\mu(V)$, so that Ruzsa's covering argument \cite{ruzsa} as presented for example in \cite[Lemma 2.4.4]{book} implies that there exists $X\subseteq A$ of cardinality at most $4K^4$ such that $A\subseteq XV^2$. Take $U=V^2$.
\end{proof}

\section{Small doubling implies small tripling}
The basic idea of the proof of \cref{thm:doubling.lc} is to show that the growth of $S^n$ can be controlled in terms of the growth of the approximate group $U$ given by \cref{prop:doubling.covered.by.app.grp-lc}. We first refine $U$ so that translates of it by distinct elements of $X$ are strongly disjoint.
\begin{lemma}\label{eq:xU.disjoint}
Suppose $X$ and $U$ are subsets of a group, with $U$ symmetric and containing the identity and $X$ finite. Then there exist $m\le5^{|X|-1}$ and $X'\subseteq X$ such that $x\notin yU^{4m}$ for all distinct $x,y\in X'$, and such that $XU\subseteq X'U^m$.
\end{lemma}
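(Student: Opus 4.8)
The plan is to pigeonhole over a geometrically growing sequence of radii on the ``covering number'' of $XU$ by powers of $U$, find a radius $m$ at which this number is stable between scale $m$ and scale $5m$, and then take $X'$ to be the set of centres of an optimal covering at scale $m$.

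Concretely, I would set $k=|X|$ (assuming $X\neq\emptyset$, the empty case being degenerate) and, for each integer $t\ge1$, define
\[
g(t)=\min\bigl\{\,|Y|\;:\;Y\subseteq X\text{ and }XU\subseteq YU^t\,\bigr\}.
\]
Because $Y=X$ is always admissible one has $1\le g(t)\le k$, and because $U$ is symmetric and contains the identity one has $U^t\subseteq U^{t'}$ for $t\le t'$, so $g$ is non-increasing. The next step is the pigeonhole: $g(5^0)\ge g(5^1)\ge\dots\ge g(5^k)$ is a non-increasing run of $k+1$ integers taking values in the $k$-element set $\{1,\dots,k\}$, so it cannot strictly decrease at every one of its $k$ steps; hence $g(5^j)=g(5^{j+1})$ for some $j\in\{0,1,\dots,k-1\}$. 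I would then set $m=5^j$, which gives $m\le5^{k-1}=5^{|X|-1}$ and $g(m)=g(5m)$.

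With $m$ fixed, let $X'\subseteq X$ realise the minimum defining $g(m)$, so that $|X'|=g(m)$ and $XU\subseteq X'U^m$ --- the second required conclusion is then immediate. For the first, I would argue by contradiction: if $x\in yU^{4m}$ for some distinct $x,y\in X'$, then (using symmetry of $U$) $y\in xU^{4m}$, so $yU^m\subseteq xU^{4m}U^m=xU^{5m}\subseteq(X'\setminus\{y\})U^{5m}$, the last inclusion because $x\in X'\setminus\{y\}$; combined with $zU^m\subseteq zU^{5m}$ for the remaining $z\in X'\setminus\{y\}$ and the decomposition $X'U^m=yU^m\cup\bigcup_{z\in X'\setminus\{y\}}zU^m$, this yields $XU\subseteq X'U^m\subseteq(X'\setminus\{y\})U^{5m}$. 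But then $X'\setminus\{y\}$ is admissible in the definition of $g(5m)$ and has size $g(m)-1<g(5m)$, contradicting $g(5m)=g(m)$. Hence $x\notin yU^{4m}$ for all distinct $x,y\in X'$, and the proof is complete.

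I do not expect a serious obstacle here; the real content is just seeing what to set up. The one genuine design decision --- and the source of the constant $5$ --- is the ratio between consecutive radii: the separation we must produce lives at radius $4m$, and deleting one point of a pair that sits within $U^{4m}$ of another costs us one extra factor of $U^m$ in the covering radius, i.e.\ it takes us from $m$ to $(4+1)m=5m$, so it is precisely stability of the covering number across a factor-$5$ jump that the argument exploits. The only things needing care are the bookkeeping in the pigeonhole --- one uses the $k+1$ radii $5^0,\dots,5^k$, so the stable step occurs at some $j\le k-1$ and hence $m\le5^{|X|-1}$ --- and the trivial small or empty cases.
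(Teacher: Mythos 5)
Your proof is correct, but it takes a genuinely different route from the paper's. The paper argues by induction on $|X|$: if two points of $X$ lie within $U^4$ of each other it deletes one, replaces $U$ by $U^5$, and invokes the inductive hypothesis for the smaller set, so the separated set is produced by a greedy removal process and the bound $5^{|X|-1}$ counts the at most $|X|-1$ deletions. You instead pigeonhole the covering number $g$ over the geometric scales $5^0,\dots,5^{|X|}$ to find a stable scale $m$ with $g(m)=g(5m)$, and then derive the separation from the minimality of an optimal cover at scale $m$: a violating pair would let you drop one centre at the cost of passing from $U^m$ to $U^{5m}$, contradicting stability. The two arguments yield identical bounds and are of comparable length; yours is the standard ``stable scale'' device from the approximate-groups literature and makes the source of both constants transparent (the $5=4+1$ from the cost of a deletion, the exponent $|X|-1$ from the number of possible values of $g$), while the paper's induction avoids introducing the auxiliary function $g$ and reads as slightly more self-contained. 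Your handling of the degenerate case $X=\varnothing$ (where the bound $m\le5^{-1}$ is vacuously unachievable) is no worse than the paper's, which has the same blind spot; the lemma is only ever applied with $X\ne\varnothing$.
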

\begin{proof}
If $x\notin yU^{4}$ for all distinct $x,y\in X$, which in particular holds vacuously if $|X|=1$, then we may take $m=1$ and $X'=X$. We may therefore assume that there exist distinct $x,y\in X$ such that $x\in yU^{4}$, and by induction that the lemma holds for all smaller sets $X$. The first of these assumptions implies in particular that $XU\subseteq(X\setminus\{x\})U^5$, and the induction hypothesis then implies that there exists $k\le5^{|X|-2}$ and $X'\subseteq X\setminus\{x\}$ such that $y\notin zU^{20k}$ for all distinct $y,z\in X'$, and such that $XU\subseteq(X\setminus\{x\})U^5\subseteq X'U^{5k}$. We may therefore take this $X'$, and $m=5k$.
\end{proof}
One can think of \cref{eq:xU.disjoint} as saying that the translates $xU^m$ of the approximate group $U^m$ by elements $x\in X'$ behave locally like cosets of a genuine subgroup. More precisely, given a subgroup $H$ of a group $G$ and elements $x,y\in G$, we trivially have $y\in xH$ if and only if $yH=xH$. The approximate group $U^m$ given by \cref{eq:xU.disjoint} satisfies a `local' version of this property, in that for all $x\in X'$ and $y\in X'U^m$, we have $y\in xU^m$ if and only if $yU^m\subseteq xU^{2m}$ and $xU^m\subseteq yU^{2m}$. The `only if' direction of this is trivial. To prove the `if' direction, first note that if $yU^m\subseteq xU^{2m}$ then
\begin{equation}\label{eq:y.in.xU}
y\in xU^{3m}.
\end{equation}
By assumption there exists $z\in X'$ such that $y\in zU^m$, but then combined with \eqref{eq:y.in.xU} and the conclusion of \cref{eq:xU.disjoint} this easily implies that $z=x$ and hence $y\in xU^m$.

The utility of this is that it allows us to prove a version of a rather useful property of the cosets of a genuine subgroup $H$ of a group $G$ with finite symmetric generating set $S$, namely that if $H$ has index bounded by $k\in\N$ then one may find a complete set of coset representatives for $H$ inside $S^{k-1}$. This observation is crucial in Breuillard, Green and Tao's finitary refinement of Gromov's theorem \cite[Corollary 11.2]{bgt}, for example. In our next result, which is the key new insight of the present work, we prove the following version of this in the setting in which we have a set of translates of a set $U$ that are strongly disjoint in the sense of \cref{eq:xU.disjoint}.
\begin{prop}\label{prop:translates.app.grp.bdd.reps}
Let $k,n\in\N$. Suppose that $S$ and $U$ are symmetric subsets of a group, each containing the identity, and that $X$ is a subset of size at most $k$ such that $S^n\subseteq XU$ and $x\notin yU^4$ for all distinct $x,y\in X$. Then there exists $X'\subseteq S^{k-1}$ with $|X'|\le|X|$ such that $S^n\subseteq X'U^2$.
\end{prop}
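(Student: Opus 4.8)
The plan is to regard the translates $xU$, $x\in X$, as a system of ``cells'' covering $S^{n}$, and to run an analogue of the classical transversal argument recalled above. First I would record the two consequences of the strong disjointness hypothesis that are needed. For every $g\in S^{n}$ there is a unique $x=\pi(g)\in X$ with $g\in xU$ (if $g\in xU\cap zU$ then $x\in gU\subseteq zU^{2}$, so $x=z$), and moreover, if $g\in x'U^{3}$ and $g\in x''U$ with $x',x''\in X$, then $x'\in gU^{3}\subseteq x''U^{4}$, so $x'=x''$ --- this is the only point at which the hypothesis is used at the level $U^{4}$ rather than $U^{2}$ or $U^{3}$. Write $I_d:=\pi(S^{d})$ for $0\le d\le n$; since $e\in S$ we have $I_0\subseteq I_1\subseteq\dots\subseteq I_n$, where $I_0=\{\pi(e)\}$ is a singleton and $I_d=\{x\in X:xU\cap S^{d}\ne\emptyset\}$, so in particular $I_n=\{x\in X:xU\cap S^{n}\ne\emptyset\}$.

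The heart of the proof --- and the step I expect to be the main obstacle --- is the claim that if $d<n$ and $I_{d+1}=I_d$, then already $I_d=I_n$. To prove it, set $W:=\bigcup_{x\in I_d}xU$; then $S^{d}\subseteq W$ by definition of $I_d$, and $S^{d+1}\subseteq W$ since $I_{d+1}=I_d$. I would then show by induction on $e$ that $S^{e}\subseteq W$ for all $e$ with $d\le e\le n$: writing $g\in S^{e+1}$ as $g=s'h$ with $s'\in S$ and $h\in S^{e}\subseteq W$, pick $x\in I_d$ with $h\in xU$ and (using $x\in I_d$) some $h_0\in S^{d}\cap xU$; then $s'h_0\in S^{d+1}\subseteq W$ lies in $x'U$ for some $x'\in I_d$, and writing $h=xu$, $h_0=xu_0$, $s'h_0=x'u_1$ with $u,u_0,u_1\in U$ gives $g=s'xu=x'u_1u_0^{-1}u\in x'U^{3}$. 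Since $g\in S^{e+1}\subseteq S^{n}$ also lies in $\pi(g)U$, the $U^{4}$-disjointness fact above forces $\pi(g)=x'\in I_d$, i.e. $g\in W$. Taking $e=n$ gives $S^{n}\subseteq W$, i.e. $I_n\subseteq I_d$, and since $I_d\subseteq I_n$ always, $I_d=I_n$. The point to watch is that the ``re-entered'' cell $x'$ is always one already counted in $I_d$, which is exactly what the $U^{4}$ (rather than $U^{2}$) in the hypothesis secures.

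Granting the claim, let $d^{*}$ be the least $d\le n$ with $I_d=I_n$. For each $d<d^{*}$ the claim gives $I_{d+1}\supsetneq I_d$, whence $k\ge|X|\ge|I_{d^{*}}|\ge|I_0|+d^{*}=1+d^{*}$, so $d^{*}\le k-1$. Consequently $I_{\min(k-1,\,n)}=I_n$; equivalently, every $x\in X$ with $xU\cap S^{n}\ne\emptyset$ meets $S^{\min(k-1,\,n)}$, which is contained in $S^{k-1}$.

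To conclude, choose for each $x\in I_n$ an element $r_x\in xU\cap S^{\min(k-1,\,n)}$, and set $X':=\{r_x:x\in I_n\}\subseteq S^{k-1}$, so $|X'|\le|X|$. For any $g\in S^{n}$ we have $\pi(g)\in I_n$ and $r_{\pi(g)}\in\pi(g)U$, hence $\pi(g)\in r_{\pi(g)}U$ and therefore $g\in\pi(g)U\subseteq r_{\pi(g)}U^{2}\subseteq X'U^{2}$. Thus $S^{n}\subseteq X'U^{2}$, as required.
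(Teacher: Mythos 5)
Your proof is correct and follows essentially the same route as the paper's: both track the increasing chain of sets $I_d=\{x\in X:xU\cap S^d\ne\varnothing\}$, use the $U^4$-separation hypothesis to show this chain must grow strictly until it reaches $I_n$ (hence stabilises by step $k-1$), and then choose representatives $r_x\in xU\cap S^{k-1}$ so that $xU\subseteq r_xU^2$. The only cosmetic difference is that you work with $I_n$ directly instead of first replacing $X$ by a minimal covering subset.
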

\begin{proof}
We may assume that $X$ is minimal such that $S^n\subseteq XU$. For each $r=0,1,\ldots,n$, define $X_r=\{x\in X:xU\cap S^r\ne\varnothing\}$, noting that $X_n=X$ by minimality of $X$.

We claim that if $X_{r+1}=X_r$ for some $r\le n-2$ then $X_{r+2}=X_{r+1}$. To see this, suppose that $X_{r+1}=X_r$, and let $x\in X_{r+2}$. By definition, this means that there exists $u\in U$ such that $xu\in S^{r+2}$. This in turn implies that there exists $s\in S$ such that $xu\in sS^{r+1}\subseteq sX_{r+1}U=sX_rU$. It follows that there exist $y\in X_r$ and $v\in U$ such that $xu=syv$. By definition of $X_r$, there exists $w\in U$ such that $yw\in S^r$. It then follows that $syw\in S^{r+1}$, hence $xu=syv\in S^{r+1}U^2$, and hence $x\in S^{r+1}U^3\subseteq X_{r+1}U^4$. By hypothesis, it then follows that $x\in X_{r+1}$ as claimed.

By induction, this claim implies that if $X_{r+1}=X_r$ for some $r< n$ then $X_r=X_n=X$. Since $|X_0|\ge1$, it follows that if $k\le n$ then $X_{k-1}=X$. Combined with the fact that $X_n=X$, this implies in all cases that for each $x\in X$ there exists $x'\in S^{k-1}$ such that $x'\in xU$, and hence $xU\subseteq x'U^2$ by symmetry of $U$. Setting $X'=\{x':x\in X\}$, we therefore have $X'\subseteq S^{k-1}$, $|X'|\le|X|$ and $S^n\subseteq XU\subseteq X'U^2$ as required.
\end{proof}

The next lemma shows that if $X$ is contained in a ball of small enough radius in $S$ and $S^n\subseteq XU$ then the further growth of $S$ is controlled by the growth of $U$.

\begin{lemma}\label{lem:inclusions.local}
Let $k,r\in\N$. Suppose $G$ is a group with symmetric generating set $S$ containing the identity, and $X\subseteq S^k$ and $U\subseteq G$ satisfy $S^{r+k}\subseteq XU$. Then $S^{mr+k}\subseteq XU^m$ for all $m\in\N$.
\end{lemma}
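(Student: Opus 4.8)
The plan is a short induction on $m$, with the base case $m=1$ being nothing more than the hypothesis $S^{r+k}\subseteq XU$. The only real idea needed for the inductive step is to peel a copy of $S^r$ off the \emph{front} of $S^{(m+1)r+k}$ rather than the back: after peeling off the front one is left with $S^{mr+k}$, to which the inductive hypothesis applies, producing a set of the form $S^r X U^m$, and now the factor $S^r X$ can be absorbed back into a ball in $S$ precisely because $X\subseteq S^k$. Peeling off the back would instead leave an expression of the form $XU^mS^r$, in which the trailing $S^r$ cannot be simplified, so the choice of side matters.

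Concretely, assuming $S^{mr+k}\subseteq XU^m$, I would write $S^{(m+1)r+k}=S^rS^{mr+k}$ (using $(m+1)r+k=r+(mr+k)$ and $S^aS^b=S^{a+b}$), then bound $S^rS^{mr+k}\subseteq S^rXU^m$ by the inductive hypothesis, then $S^rXU^m\subseteq S^{r+k}U^m$ since $X\subseteq S^k$ gives $S^rX\subseteq S^rS^k=S^{r+k}$, and finally $S^{r+k}U^m\subseteq (XU)U^m=XU^{m+1}$ by the hypothesis $S^{r+k}\subseteq XU$. Chaining these four inclusions closes the induction.

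I do not expect any genuine obstacle here: the statement is essentially a bookkeeping identity among power sets, and the proof is two or three lines. The points to take care over are purely the arithmetic of the exponents in each of the four steps above, together with the observation that nothing in the argument uses any property of $U$ beyond the containment $S^{r+k}\subseteq XU$ (in particular $U$ need not be symmetric or contain the identity), so no extra hypotheses are being smuggled in.
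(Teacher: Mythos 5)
Your proof is correct and is essentially identical to the paper's: both proceed by induction on $m$, peeling a copy of $S^r$ off the front and using the chain $S^rS^{(m-1)r+k}\subseteq S^rXU^{m-1}\subseteq S^{r+k}U^{m-1}\subseteq XU^m$. Your remark that $U$ needs no properties beyond the stated containment is a correct (if minor) observation not made explicit in the paper.
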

\begin{proof}
The case $m=1$ is true by hypothesis, and for $m>1$ by induction we have $S^{mr+k}=S^rS^{(m-1)r+k}\subseteq S^rXU^{m-1}\subseteq S^{r+k}U^{m-1}\subseteq XU^m$, as required.
\end{proof}

\begin{proof}[Proof of \cref{thm:doubling=>tripling,thm:doubling.lc}]
We will prove \cref{thm:doubling.lc} in detail; the proof of \cref{thm:doubling=>tripling} is identical, but with \cref{prop:doubling.covered.by.app.grp} in place of \cref{prop:doubling.covered.by.app.grp-lc} right at the start. \cref{prop:doubling.covered.by.app.grp-lc} implies that there exists a $2^{12}K^{24}$-approximate group $U\subseteq S^{4n}$ satisfying $\mu(U)\le4K^5\mu(S^n)$ and a set $X\subseteq S^n$ of size at most $4K^4$ such that $S^n\subseteq XU$. \cref{eq:xU.disjoint} then implies that there exist $m\le5^{4K^4}$ and $X'\subseteq X$ such that $x\notin yU^{4m}$ for all distinct $x,y\in X'$, and such that $S^n\subseteq X'U^m$. \cref{prop:translates.app.grp.bdd.reps} then implies that there exists $X''\subseteq S^{4K^4-1}\subseteq S^{\lfloor n/2\rfloor}$ with $|X''|\le4K^4$ such that $S^n\subseteq X''U^{2\cdot5^{4K^4}}$. Applying \cref{lem:inclusions.local} with $k=\lfloor n/2\rfloor$ and $r=\lceil n/2\rceil$ then implies that $S^{3n}\subseteq X''U^{10\cdot5^{4K^4}}$, and hence that $\mu(S^{3n})\le4K^4\mu(U^{10\cdot5^{4K^4}})\le4K^4(2^{12}K^{24})^{10\cdot5^{4K^4}-1}\mu(U)\le16K^9(2^{12}K^{24})^{10\cdot5^{4K^4}-1}\mu(S^n)\le(2^{12}K^{24})^{10\cdot5^{4K^4}}\mu(S^n)$. The fact that $S^{2n}$ is a $(2^{12}K^{24})^{30\cdot5^{4K^4}}$-approximate group then follows from \cref{lem:tripling->AG}.
\end{proof}

\section*{Acknowledgments} 
We are grateful to four anonymous referees for comments on earlier drafts.

\bibliographystyle{amsplain}


\begin{dajauthors}
\begin{authorinfo}[romain]
  Romain Tessera\\
  Institut de Math\'ematiques de Jussieu-Paris Rive Gauche\\
  Universit\'e Paris Cit\'e \\
  France\\
  romain.tessera\imageat{}imj-prg\imagedot{}fr \\
  \url{https://www.normalesup.org/~tessera/}
\end{authorinfo}
\begin{authorinfo}[matt]
  Matthew Tointon\\
  School of Mathematics\\
  University of Bristol\\
  United Kingdom\\
  m.tointon\imageat{}bristol\imagedot{}ac\imagedot{}uk \\
  \url{https://www.tointon.neocities.org}
\end{authorinfo}
\end{dajauthors}

\end{document}